\newcommand{\al}{\alpha}
\newcommand{\be}{\beta}
\newcommand{\ga}{\gamma}
\newcommand{\la}{\lambda}
\newcommand{\om}{\omega}
\newcommand{\eps}{\varepsilon}
\newcommand{\iy}{\infty}
\theoremstyle{plain}
\newtheorem{thm}{Theorem}
\newtheorem{lem}{Lemma}
\theoremstyle{definition}
\newtheorem{alg}{Algorithm}
\newtheorem{ip}{Inverse Problem}
\theoremstyle{remark}
\begin{document}

\begin{center}
{\large\bf A partial inverse problem for the Sturm-Liouville operator on the graph with a loop	
}
\\[0.2cm]
{\bf Chuan-Fu Yang, Natalia P. Bondarenko} \\[0.2cm]
\end{center}

\vspace{0.5cm}

{\bf Abstract.} The Sturm-Liouville operator with singular potentials on the lasso graph is considered.
We suppose that the potential is known a priori on the boundary edge, and recover the potential on the loop
from a part of the spectrum and some additional data. We prove the uniqueness theorem and provide a constructive algorithm
for the solution of this partial inverse problem.
 
\medskip

{\bf Keywords:} partial inverse spectral problem; Sturm-Liouville operator on a lasso graph; quantum graph; singular potential.

\medskip

{\bf AMS Mathematics Subject Classification (2010):} 34A55; 34B05; 34B09; 34B45; 34L20; 34L40; 47E05

\vspace{1cm}

{\large \bf 1. Introduction}

\bigskip

Differential operators on geometrical graphs, also called quantum graphs, are intensively studied by mathematicians in recent years,
and have applications in organic chemistry, mechanics, mesoscopic physics, nanotechnology, theory of waveguides and other branches
of science \cite{Kuch02}. There is an extensive literature, devoted to quantum graphs. We mention here works \cite{Exner08, PPP04},
where the further references could be found. A nice elementary introduction to the theory of quantum graphs is provided in \cite{Kuch04}.
The recent paper \cite{Yur16} contains a good overview of results on inverse spectral problems for differential operators on graphs,
which consist in recovering differential operators (especially coefficients of differential expressions) from various types of spectral data. 

In this paper, we consider the Sturm-Liouville operator on the graph with a loop. We suppose that the potential is known a priori on a part of the graph,
and recover the potential on the remaining part from a part of the spectrum and some additional data. 
Such partial inverse problems have been studied in papers
\cite{Piv00, Yang10, Yang11, Yur09, Bond17-1, Bond17-2, Bond17-preprint} for star-shaped graphs. 
In the present paper, we obtain the first results
in this direction for the graph with a loop. We formulate a partial inverse problem on a lasso graph (see Figure~\ref{img:lasso}), 
prove the uniqueness theorem and provide a constructive algorithm for solution of this problem. 
Note that complete inverse problems for differential operators on lasso graphs were studied in \cite{MMT08, MT12, Kur13}.
We hope that in the future, our results will be generalized for graphs with a more complicated structure.
We develop the technique of \cite{Bond17-1, Bond17-2, Bond17-preprint}, based on the Riesz basis property of some systems of vector functions. 
We also mention that our problem on a graph is related to the Hochstadt-Lieberman problem on a finite interval \cite{HL78}.

The paper is organized as follows. In Section~2, we state the boundary value problem on the lasso graph and study asymptotic properties
of its eigenvalues. Section~3 is devoted to the periodic inverse Sturm-Liouville problem, which is further used as an auxiliary step for recovering 
the potential on the loop. In Section~4, we formulate the partial inverse problem, provide our main results and proofs.

\bigskip

{\large \bf 2. Asymptotic formulas for eigenvalues}

\bigskip

Consider the lasso graph $G$, represented in Figure~\ref{img:lasso}. The edge $e_1$ is a boundary edge of length $l_1 = m \in \mathbb N$, 
the edge $e_2$ is a loop of length $l_2 = 1$. Introduce a parameter $x_j$ for each edge $e_j$, $j = 1, 2$, $x_j \in [0, l_j]$.
The value $x_1 = 0$ corresponds to the boundary vertex, and $x_1 = m$ corresponds to the internal vertex. 
For the loop $e_2$, both ends $x_2 = 0$ and $x_2 = 1$ correspond to the internal vertex.

\begin{figure}[h!]
\centering
\begin{tikzpicture}
\filldraw (2, 1) circle (2pt);
\filldraw (7, 1) circle (2pt);
\draw (2, 1) edge node[auto]{$e_1$} (7, 1);
\draw (1, 1) circle [radius = 1] node[auto]{$e_2$};
\draw (2.3, 1.3) node{$l_1$};
\draw (6.7, 1.3) node {$0$};
\draw (1.7, 1.3) node {$0$};
\draw (1.7, 0.7) node {$l_2$};
\end{tikzpicture}
\caption{Lasso graph}
\label{img:lasso}
\end{figure}
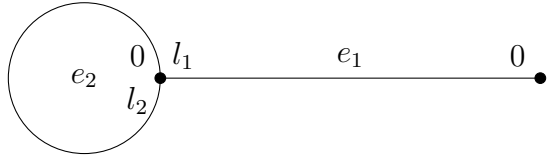

Let $y = [y_j(x_j)]_{j = 1, 2}$ be a vector function on the graph $G$.
Consider Sturm-Liouville expressions
$$
\ell_j y_j := -y_j'' + q_j(x_j) y_j, \quad j = 1, 2,  
$$
on the edges of $G$, where $q_j$, $j = 1, 2$, are real-valued functions from 
$W_2^{-1}(0, l_j)$. This means that $q_j = \sigma_j'$, $\sigma_j \in L_2(0, l_j)$, where the derivative is understood in the sense of distributions. 
We call the functions $\sigma_j$ the {\it potentials}. 
Define the {\it quasi-derivatives} $y_j^{[1]} = y_j' - \sigma_j y_j$, $j = 1, 2$.
Then the differential expressions $\ell_j$ can be undrestood in the following sense:
$$
\ell_j y_j = -(y_j^{[1]})' - \sigma_j(x_j) y_j^{[1]} - \sigma_j^2(x_j) y_j,
$$
on the domain 
$$
\mathcal D(\ell_j) = \{ y_j \in W_2^1[0, l_j] \colon y_j^{[1]} \in W_1^1[0, l_j], \: \ell_j y_j \in L_2(0, l_j) \}. 
$$

Inverse problems for Sturm-Liouville operators with singular potentials $q \in W_2^{-1}$ on a finite interval
were studied in papers \cite{HM03, HM04-2spectra, HM04-half}. However, there are only a few results for such operators
on graphs (see \cite{FIY08, Bond17-preprint}). For the purposes of the present paper, the more general class $W_2^{-1}$ causes
no additional difficulties in comparison with $L_2$.

We study the boundary value problem $L$ for the Sturm-Liouville equations on the graph $G$:
\begin{equation} \label{eqv}
(\ell_j y_j)(x_j) = \la y_j(x_j), \quad x_j \in (0, l_j), \quad j = 1, 2,
\end{equation}
with the standard matching conditions
$$
   y_1(m) = y_2(0) = y_2(1), \quad y_1^{[1]}(m) - y_2^{[1]}(0) + y_2^{[1]}(1) = 0
$$
in the internal vertex, and the Dirichlet boundary condition $y_1(0) = 0$ in the boundary vertex.

For each fixed $j = 1, 2$, let $C_j(x_j, \la)$ and $S_j(x_j, \la)$ be the solutions of the corresponding equation \eqref{eqv}
under the initial conditions 
$$
   C_j(0, \la) = S_j^{[1]}(0, \la) = 1, \quad C_j^{[1]}(0, \la) = S_j(0, \la) = 0.
$$

Further we use the following notations.
Let $B_{2, a}$ be the class of Paley-Wiener functions of exponential type not greater than $a$, belonging to $L_2(\mathbb R)$.
The symbols $\varkappa_{k, odd}(\rho)$ and $\varkappa_{k, even}(\rho)$ denote various odd and even functions from $B_{2, k}$, respectively.
Note that
$$
    \varkappa_{k, odd}(\rho) = \int_0^k \mathcal K(t) \sin \rho t \, dt, \quad \varkappa_{k, even}(\rho) = \int_0^k \mathcal N(t) \cos \rho t \,dt,
$$
where $\mathcal K, \mathcal N \in L_2(0, k)$.
The notation $\{ \varkappa_n \}$ stands for various sequences in $l_2$.

Relying on the results of papers \cite{HM03, HM04-transform, HM04-2spectra}, we obtain 
the following relations for $j = 1, 2$:
\begin{equation} \label{CS}
\arraycolsep=1.6pt\def\arraystretch{1.5}
\left.
\begin{array}{ll}
   C_j(l_j, \la) & = \cos \rho l_j + \varkappa_{l_j, even}(\rho), \\
   S_j(l_j, \la) & = \dfrac{\sin \rho l_j}{\rho} + \dfrac{\varkappa_{l_j, odd}(\rho)}{\rho}, \\
   S_j^{[1]}(l_j, \la) & = \cos \rho l_j + \varkappa_{l_j, even}(\rho).
\end{array}
\qquad \right\}
\end{equation}

The boundary value problem $L$ has a purely discrete spectrum, consisting of real eigenvalues.
The eigenvalues of $L$ coincide with the zeros of the characteristic function
\begin{equation} \label{Delta}
   \Delta(\la) = S_1^{[1]}(m, \la) S_2(1, \la) + S_1(m, \la) \left( S_2^{[1]}(1, \la) + C_2(1, \la) - 2 \right)
\end{equation}
with respect to their multiplicities.
The asymptotic behavior of the eigenvalues is described by the following lemma.

\begin{lem} \label{lem:asympt}
The problem $L$ has a countable set of eigenvalues, which can be numbered as 
$\{ \la_{nk} \}_{n \in \mathbb Z, k = \overline{1, m}} \cup \{ \la_{n0} \}_{n \in \mathbb N}$
(counting with the multiplicities), satisfying
\begin{equation} \label{asymptla}
\arraycolsep=1.4pt\def\arraystretch{1.5}
\left.
\begin{array}{ll}
    \rho_{nk} & := \sqrt{\la_{nk}} = |2 \pi n + \al_k| + \varkappa_n, \quad n \in \mathbb Z, \: k = \overline{1, m}, \\
    \rho_{n0} & := \sqrt{\la_{n0}} = \pi n + \varkappa_n, \quad n \in \mathbb N,
\end{array}
\qquad \right\}
\end{equation}
where 
\begin{equation} \label{alk}
    \al_k \in \left( \frac{(k-1)\pi}{m}, \frac{\bigl(k-\frac{1}{2}\bigr)\pi}{m} \right), \quad k = \overline{1, m}.
\end{equation}
\end{lem}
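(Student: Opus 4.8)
The plan is to realize the eigenvalues --- the zeros of $\Delta$ in \eqref{Delta} --- as small perturbations of the zeros of the ``principal part'' of $\rho\,\Delta(\la)$ obtained by discarding the Paley--Wiener remainders in \eqref{CS}, and then to compare the two by Rouch\'e's theorem. First I would substitute \eqref{CS} into \eqref{Delta}, using $S_2^{[1]}(1,\la)+C_2(1,\la)-2 = 2\cos\rho-2+\varkappa_{1,even}(\rho)$, and multiply by $\rho$, obtaining
\begin{equation*}
   \rho\,\Delta(\la) = \Delta_0(\rho) + \varkappa_{m+1,odd}(\rho), \qquad
   \Delta_0(\rho) := \cos(\rho m)\sin\rho + 2(\cos\rho-1)\sin(\rho m),
\end{equation*}
where the remainder is a finite sum of products of bounded trigonometric polynomials of degree $\le m$ with functions from $B_{2,1}$ (plus two products of pairs of such Paley--Wiener functions); each summand is an odd function in $B_{2,m+1}$, so the remainder has the form $\varkappa_{m+1,odd}(\rho)$, and in particular $|\varkappa_{m+1,odd}(\rho)| = o\bigl(e^{(m+1)|\mathrm{Im}\,\rho|}\bigr)$ as $|\rho|\to\iy$.

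Next I would analyse the zeros of $\Delta_0$. Using $\sin\rho = 2\sin\tfrac\rho2\cos\tfrac\rho2$ and $\cos\rho-1 = -2\sin^2\tfrac\rho2$, one factors
\begin{equation*}
   \Delta_0(\rho) = 2\sin\tfrac\rho2\cdot g(\rho), \qquad
   g(\rho) := \cos(\rho m)\cos\tfrac\rho2 - 2\sin(\rho m)\sin\tfrac\rho2 = \tfrac32\cos\!\bigl((m+\tfrac12)\rho\bigr) - \tfrac12\cos\!\bigl((m-\tfrac12)\rho\bigr).
\end{equation*}
The last form shows $g$ is even, $2\pi$-antiperiodic, and $g((2n+1)\pi)=0$ for every $n$. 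Writing $g(\rho)=R(\rho)\cos\psi(\rho)$ with $R(\rho)=(1+3\sin^2\tfrac\rho2)^{1/2}>0$ and $\psi(\rho)=m\rho+\varphi(\rho)$, $\varphi(\rho)=\arctan(2\tan\tfrac\rho2)$ continued so as to increase by $\pi$ on each period, the function $\psi$ increases strictly on $(0,\pi)$ from $0$ to $(m+\tfrac12)\pi$, so $g$ has exactly $m$ simple zeros $\al_1<\dots<\al_m$ in $(0,\pi)$, characterized by $\psi(\al_k)=(k-\tfrac12)\pi$; since $0<\varphi(\al_k)<\tfrac\pi2$ this gives precisely \eqref{alk}. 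By evenness and antiperiodicity the zero set of $\Delta_0$ is $\{\pi n:n\in\mathbb Z\}\cup\{\,|2\pi n+\al_k|:n\in\mathbb Z,\ k=\overline{1,m}\,\}$; a direct computation shows all these zeros are simple and pairwise distinct, with consecutive gaps bounded below by $\min(\al_1,\pi/(2m))$, and that $|\Delta_0'|$ is bounded below along them.

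Finally I would run Rouch\'e's theorem. On a family of expanding contours $\Gamma_N$ that stay a fixed positive distance from the zero set of $\Delta_0$ the standard lower bound $|\Delta_0(\rho)|\ge c\,e^{(m+1)|\mathrm{Im}\,\rho|}$ holds, and by the estimate above it dominates the remainder for $N$ large; hence $\rho\,\Delta(\la)$ and $\Delta_0$ have equally many zeros inside $\Gamma_N$, and letting $N\to\iy$ yields the numbering $\{\la_{nk}\}_{n\in\mathbb Z,\,k=\overline{1,m}}\cup\{\la_{n0}\}_{n\in\mathbb N}$. Applying Rouch\'e again on a small circle $|\rho-\rho^0|=\eps$ around each zero $\rho^0$ of $\Delta_0$ of large modulus shows exactly one eigenvalue $\rho_{nk}$ lies there, so $\rho_{nk}=\rho^0_{nk}+o(1)$; expanding $\Delta_0$ at $\rho^0_{nk}$ and solving $\Delta_0(\rho_{nk})=-\varkappa_{m+1,odd}(\rho_{nk})$ gives $\rho_{nk}-\rho^0_{nk}=O\bigl(|\varkappa_{m+1,odd}(\rho^0_{nk})|\bigr)$. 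To upgrade this to $\rho_{nk}-\rho^0_{nk}=\varkappa_n$, note that for each fixed $k$ (and for the sequence $\rho^0_{n0}=\pi n$) the numbers $\varkappa_{m+1,odd}(\rho^0_{nk})$ are, after a linear recombination, Fourier coefficients of an $L_2$-function relative to an orthonormal system --- $\{e^{i(2\pi n+\al_k)t}\}_{n\in\mathbb Z}$ on $(0,1)$ in the first case and $\{e^{i\pi n t}\}_{n\in\mathbb Z}$ on $(0,2)$ (after folding the interval $(0,m)$, $m\in\mathbb N$) in the second --- hence lie in $l_2$, while products of two of them lie in $l_1\subset l_2$; this finishes the proof.

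I expect the main obstacle to be this coarse Rouch\'e step: the explicit choice of the contours $\Gamma_N$ and the uniform bound $|\Delta_0(\rho)|\ge c\,e^{(m+1)|\mathrm{Im}\,\rho|}$ away from the (non-equispaced) zeros of $\Delta_0$, together with the matching estimate for $\varkappa_{m+1,odd}$. The trigonometric localization in the second step is elementary but is exactly what produces \eqref{alk}, and the Fourier-coefficient identification in the last step is what upgrades the error from $o(1)$ to a sequence in $l_2$.
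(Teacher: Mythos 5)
Your proposal is correct and follows essentially the same route as the paper: extract the unperturbed characteristic function $\cos(\rho m)\sin\rho+2(\cos\rho-1)\sin(\rho m)$ from \eqref{CS} and \eqref{Delta}, locate its zeros $\{\pi n\}\cup\{|2\pi n+\al_k|\}$ with $\al_k$ satisfying \eqref{alk}, and pass to the perturbed eigenvalues via Rouch\'e's theorem and the standard $l_2$-refinement -- your phase-function localization of the $\al_k$ is a rigorous substitute for the paper's graphical treatment of equation \eqref{tan}, and your Fourier-coefficient argument spells out what the paper delegates to \cite[Theorem~1.1.3]{FY01}. The one detail worth adding is that non-real zeros of the unperturbed function must be excluded before the global Rouch\'e count yields the stated numbering; this is immediate either from the paper's factorization $D(\rho)=\sin\rho\,P_m(\cos\rho)$, whose degree-$m$ polynomial $P_m$ already has the $m$ real roots $\cos\al_k$, or from counting the roots of the degree-$(2m+1)$ polynomial in $e^{i\rho}$ underlying your expression for $g$, which are exhausted by the real zeros you found.
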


\begin{proof}
In the case $\sigma_j = 0$, $j = 1, 2$, the characteristic function \eqref{Delta} takes the form
$$
   \Delta_0(\la) = \cos \rho m \frac{\sin \rho}{\rho} + \frac{\sin \rho m}{\rho} (2 \cos \rho - 2), 
$$
where $\rho = \sqrt{\la}$. Note that the function $D(\rho) := \rho \Delta_0(\rho^2)$ is odd and $2\pi$-periodic, so it is sufficient to investigate its zeros 
on $[0, \pi)$. On the one hand, $D(\rho) = \sin \rho P_m(\cos \rho)$, where $P_m(z)$ is a polynomial of degree $m$,
so $D(\rho)$ has no more than $m + 1$ zeros (counting with their multiplicities) on $[0, \pi)$. On the other hand, for $\rho \ne 0$
the equation $D(\rho) = 0$ is equivalent to the following one
\begin{equation} \label{tan}
    \tan \rho m = -\frac{\sin \rho}{2 \cos \rho - 2}.
\end{equation}
\begin{figure}[h!]
\begin{center}
\includegraphics[scale = 0.7]{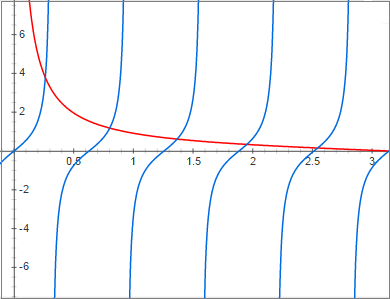}
\end{center}
\caption{Plots for equation \eqref{tan}, $m = 5$}
\label{img:tan}
\end{figure}
From the plots in Figure~\ref{img:tan}, one can easily see that this equation has exactly $m$ simple roots $\al_k$ on the interval $(0, \pi)$, satisfying \eqref{alk}.
Moreover, $D(0) = 0$. Thus, the function $\Delta_0(\la)$ has the zeros 
$$
    \la_{nk}^0 = (2 \pi n + \al_k)^2, \quad n \in \mathbb Z, \: k = \overline{1, m}, \qquad \la_{n0}^0 = (\pi n)^2, \quad n \in \mathbb N. 
$$

Now we turn to the case of nonzero potentials. Using \eqref{CS}, we obtain the relation
$$
   \Delta(\la) = \Delta_0(\la) + \frac{\varkappa_{m+1, odd}(\rho)}{\rho}.
$$
Applying the standard argument, based on Rouche's theorem (see, for example, \cite[Theorem~1.1.3]{FY01}), we arrive at the asymptotic formulas \eqref{asymptla} for the eigenvalues of the
problem $L$.
\end{proof}

\bigskip

{\bf \large 3. Periodic inverse Sturm-Liouville problem}

\bigskip

Inverse spectral problems on graphs with cycles usually generalize the periodic inverse problem on a finite interval.
We describe the periodic problem on a loop $e_2$ in this section, because we need it for statement and solution of our partial inverse problem.

Define 
$$
h(\la) := S_2(1, \la), \quad H(\la) := C_2(1, \la) - S_2^{[1]}(1, \la), \quad d(\la) := S_2^{[1]}(1, \la) + C_2(1, \la) - 2.
$$ 
Denote by $\{ \nu_n \}_{n \in \mathbb N}$
the zeros of the entire function $h(\la)$ and put $\omega_n := \mbox{sign}\, H(\nu_n)$, $\Omega := \{ \om_n \}_{n \in \mathbb N}$. 
The periodic inverse problem is formulated as follows.

\begin{ip} \label{ip:periodic}
Given the functions $h(\la)$, $d(\la)$ and the sequence of signs $\Omega$, construct the potential $\sigma_2$.
\end{ip}

Analogs of Inverse Problem~\ref{ip:periodic} for the case of a regular potential $q_2 \in L_2(0, 1)$ has been studied 
in \cite{Stan70, MO75} (see also paper \cite{Yur08}, where the solution of the periodic problem
has been applied to the inverse problem on a graph). However, the known results can be easily generalized for the case $q_2 \in W_2^{-1}(0, 1)$.
Indeed, it is easy to check that 
\begin{gather} \label{sm3} 
   S_2^{[1]}(1, \nu_n) = (d(\nu_n) + 2 - H(\nu_n))/2, \\
   C_2(1, \la) S_2^{[1]}(1, \la) - C_2^{[1]}(1, \la) S_2(1, \la) \equiv 1.
\end{gather}
Consequently, we have
$$
    H^2(\la) - (d(\la) + 2)^2 = -4 (1 + C_2^{[1]}(1, \la) h(\la)).
$$
Hence
\begin{equation} \label{Hnun}
  	H(\nu_n) = \om_n \sqrt{d(\nu_n)(d(\nu_n) + 2)}.
\end{equation}


Introduce the norming constants
$$
   \be_n := \int_0^1 S_2^2(x, \nu_n) \, dx. 
$$
Using the standard methods (see \cite[Lemma 1.1.1]{FY01}), one can show that
\begin{equation} \label{ben}
   \be_n = \dfrac{d}{d\la} h(\la)_{|\la = \nu_n} S_2^{[1]}(1, \nu_n). 
\end{equation}
It is proved in \cite{HM03}, that the spectral data $\{ \nu_n, \be_n \}_{n \in \mathbb N}$ uniquely specify
the potential $\sigma_2$, and an algorithm for the reconstruction is provied. 
Thus, Inverse Problem~\ref{ip:periodic} has a unique solution, which can be found by the following algorithm.

\begin{alg} \label{alg:periodic}
Let the functions $h(\la)$, $d(\la)$ and the signs $\Omega$ be given.

\begin{enumerate}
\item Find $\{ \nu_n \}_{n \in \mathbb N}$ as the zeros of $h(\la)$.
\item Calculate $H(\nu_n)$, $n \in \mathbb N$ by \eqref{Hnun}.
\item Find $S_2^{[1]}(1, \nu_n)$ by \eqref{sm3}.
\item Calculate the norming constants $\{ \be_n \}_{n \in \mathbb N}$, using \eqref{ben}.
\item Recover the potential $\sigma_2$ from the spectral data $\{ \nu_n, \be_n \}_{n \in \mathbb N}$, applying the algorithm from~\cite{HM03}. 
\end{enumerate}
\end{alg}

\bigskip

{\large \bf 4. Partial inverse problem}

\bigskip

In this section, we give the statement of the studied partial inverse problem, prove the uniqueness theorem and develop a constructive algorithm for its solution.

Fix a $k = \overline{1, m}$, and denote by $\mathcal I$ the set of indices $\{ (n, k) \colon n \in \mathbb Z \} \cup \{ (n, 0) \colon n \in \mathbb N \}$.
Consider the subspectrum $\Lambda := \{ \la_{nj} \}_{(n, j) \in \mathcal I}$. 
Here and below we assume that the eigenvalues are numbered with respect to their asymptotics according to Lemma~\ref{lem:asympt}.
Note that this numeration is not unique, so a finite number of first eigenvalues in $\Lambda$ can be chosen arbitrarily. 

Impose the following {\bf assumptions}:

\smallskip

($A_1$) All the values in $\Lambda$ are distinct.

($A_2$) All the values in $\Lambda$ are positive.

($A_3$) The functions $h(\la)$ and $d(\la)$ do not have common zeros.

\smallskip

Assumption ($A_1$) is used for simplicity, the case of multiple eigenvalues require some tecnical modifications
(see discussion in \cite{Bond17-1}). Assumption ($A_2$) can be achieved by a shift of the spectrum.
Assumption ($A_3$) is the only principal one. One can easily check, that ($A_3$) is equivalent to the condition
$\om_n \ne 0$, $n \in \mathbb N$.

Under assumptions~($A_1$)--($A_3$), we study the following partial inverse problem.

\begin{ip} \label{ip:part}
Given the potential $\sigma_1$, the subspectrum $\Lambda$ and the signs $\Omega$,
find the potential $\sigma_2$.
\end{ip}

Proceed to the solution of the formulated problem.
Using relations \eqref{CS}, we get
\begin{equation} \label{intKN}
\arraycolsep=1.4pt\def\arraystretch{1.5}
\left.
\begin{array}{l}
   S_2(1, \la) = \frac{\sin \rho}{\rho} + \frac{1}{\rho} \int_0^1 K(t) \sin \rho t \, dt, \\
   d(\la) = 2 \cos \rho + 2 \int_0^1 N(t) \cos \rho t \, dt - 2,
\end{array}
\qquad \right\}
\end{equation}
where $K$ and $N$ are some real-valued functions from $L_2(0, 1)$.
Substituting \eqref{intKN} into \eqref{Delta}, we derive the relation
\begin{equation} \label{relKN}
    \int_0^1 K(t) a_{nj} \sin \rho_{nj} t \, dt + \int_0^1 N(t) b_{nj} \cos \rho_{nj} t \, dt = f_{nj}, \quad (n, j) \in \mathcal I,
\end{equation}
where
\begin{equation} \label{defabf}
   a_{nj} = S_1^{[1]}(m, \la_{nj}), \quad b_{nj} = 2 \rho_{nj} S_1(m, \la_{nj}), \quad
   f_{nj} = -a_{nj} \sin \rho_{nj} - b_{nj} (\cos \rho_{nj} - 1).
\end{equation}
Introduce the real Hilbert space $\mathcal H := L_2(0, 1) \oplus L_2(0, 1)$ with the scalar product
$$
   (g, h)_{\mathcal H} = \int_0^1 ( g_1(t) h_1(t) + g_2(t) h_2(t) ) \, dt, \quad g, h \in \mathcal H, \quad 
   g = \begin{bmatrix} g_1 \\ g_2 \end{bmatrix}, \: h = \begin{bmatrix} h_1 \\ h_2 \end{bmatrix}.
$$
Obviously, the vector functions
\begin{equation} \label{defv}
   f(t) := \begin{bmatrix} K(t) \\ N(t) \end{bmatrix}, \quad 
   v_{nj}(t) := \begin{bmatrix}a_{nj} \sin \rho_{nj} t \\ b_{nj} \cos \rho_{nj} t \end{bmatrix}, \quad (n, j) \in \mathcal I,
\end{equation}
belong to $\mathcal H$, and relation \eqref{relKN} can be rewritten in the form
\begin{equation} \label{scal}
 (f, v_{nj})_{\mathcal H} = f_{nj}, \quad (n, j) \in \mathcal I.
\end{equation}

\begin{lem} \label{lem:complete}
The system of vector functions $\mathcal V := \{ v_{nj} \}_{(n, j) \in \mathcal I}$ is complete in $\mathcal H$.
\end{lem}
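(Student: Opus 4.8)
The plan is to prove completeness by duality: it suffices to show that every $g \in \mathcal H$ with components $g_1, g_2 \in L_2(0,1)$ satisfying $(g, v_{nj})_{\mathcal H} = 0$ for all $(n,j) \in \mathcal I$ is the zero element. First I would attach to $g$ the entire functions
$$
   G_1(\rho) := \int_0^1 g_1(t) \sin \rho t \, dt, \qquad G_2(\rho) := \int_0^1 g_2(t) \cos \rho t \, dt,
$$
which are odd and even, respectively, and, since $g_1, g_2 \in L_2(0,1)$, belong to $B_{2,1}$; in particular they are of exponential type $\le 1$ and both bounded and square-integrable on $\mathbb R$. By \eqref{defabf} and \eqref{defv}, the orthogonality relations take the form
$$
   S_1^{[1]}(m, \la_{nj}) \, G_1(\rho_{nj}) + 2 \rho_{nj} S_1(m, \la_{nj}) \, G_2(\rho_{nj}) = 0, \qquad (n,j) \in \mathcal I.
$$

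The crucial idea is to use that the numbers $\la_{nj}$ are eigenvalues of $L$. Put $\tilde h(\rho) := \rho \, S_2(1, \rho^2)$ and $\tilde d(\rho) := d(\rho^2)$; by \eqref{intKN} both are entire of exponential type $\le 1$ and bounded on $\mathbb R$, with $\tilde h$ odd and $\tilde d$ even. From \eqref{Delta} one has $\rho \, \Delta(\rho^2) = S_1^{[1]}(m, \rho^2) \, \tilde h(\rho) + \rho \, S_1(m, \rho^2) \, \tilde d(\rho)$, so $\Delta(\la_{nj}) = 0$ gives a second scalar relation at $\rho = \rho_{nj}$ with the same coefficient vector $\bigl( S_1^{[1]}(m, \la_{nj}), \, \rho_{nj} S_1(m, \la_{nj}) \bigr)$. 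That vector is nonzero: by $(A_2)$ we have $\rho_{nj} > 0$, and $S_1(m, \cdot)$ and $S_1^{[1]}(m, \cdot)$ cannot vanish simultaneously (otherwise $S_1(\cdot, \la) \equiv 0$, contradicting $S_1^{[1]}(0, \la) = 1$). Hence the vectors $\bigl( G_1(\rho_{nj}), \, 2 G_2(\rho_{nj}) \bigr)$ and $\bigl( \tilde h(\rho_{nj}), \, \tilde d(\rho_{nj}) \bigr)$ of $\mathbb R^2$, both orthogonal to it, are collinear, so the odd entire function
$$
   W(\rho) := G_1(\rho) \, \tilde d(\rho) - 2 \, G_2(\rho) \, \tilde h(\rho)
$$
vanishes on $\{ \pm \rho_{nk} \}_{n \in \mathbb Z} \cup \{ \pm \rho_{n0} \}_{n \in \mathbb N} \cup \{ 0 \}$. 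Since $\tilde h, \tilde d$ are bounded on $\mathbb R$ and $G_1, G_2 \in L_2(\mathbb R)$, we get $W \in B_{2,2}$.

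The main obstacle is to conclude that $W \equiv 0$. By Lemma~\ref{lem:asympt} and \eqref{alk} the known zeros of $W$ form an $l_2$-perturbation of $\{ \pm(2 \pi n + \al_k) \}_{n \in \mathbb Z} \cup \{ \pi n \}_{n \in \mathbb Z}$, a set whose counting function on $[-r, r]$ equals $\tfrac{4r}{\pi} + O(1)$, i.e.\ exactly the critical density for the class $B_{2,2}$; hence a crude zero-counting is not enough, and one has to exploit the arithmetic of the two interlacing families together with $\al_k \in (0, \pi)$, so that $\sin \al_k \ne 0$ and $\al_k \notin \pi \mathbb Z$. Writing $W(\rho) = \int_{-2}^2 \omega(t) e^{i \rho t} \, dt$ with $\omega \in L_2(-2, 2)$ odd, I would periodize $\omega$ with period $2$ using the zeros $\{ \pi n \}$ and with period $1$ using the zeros $\{ 2 \pi n \pm \al_k \}$; the resulting relations among the folded pieces of $\omega$ force $\omega \equiv 0$, the $l_2$-remainders and finitely many exceptional low eigenvalues being absorbed by the stability theory of exponential Riesz bases in the spirit of \cite{Bond17-1, Bond17-2, Bond17-preprint}. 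Finally, from $W \equiv 0$, i.e.\ $G_1(\rho) \, \tilde d(\rho) = 2 \, G_2(\rho) \, \tilde h(\rho)$, assumption $(A_3)$ (no common zeros of $h$ and $d$) shows that $G_1 / \tilde h$ is entire; since $\tilde h(\rho) = \sin \rho + \varkappa_{1, odd}(\rho)$ is of sine type of exponential type $1$ and $G_1 \in B_{2,1}$ vanishes at its zeros, this quotient is a bounded entire function of zero exponential type, hence a constant, which must be $0$ because $G_1 \in L_2(\mathbb R)$ whereas $\tilde h \notin L_2(\mathbb R)$. Thus $G_1 \equiv 0$, then $G_2 \equiv 0$ from $G_2 \, \tilde h \equiv 0$, so $g = 0$ and $\mathcal V$ is complete.
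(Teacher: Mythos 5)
Your setup is the same as the paper's: the orthogonality relations together with $\Delta(\la_{nj})=0$ yield that the odd entire function $W(\rho)=G_1(\rho)\,\tilde d(\rho)-2G_2(\rho)\,\tilde h(\rho)$ (which is exactly $\rho$ times the function \eqref{defW} of the paper) lies in $B_{2,2}$ and vanishes at $\pm\rho_{nj}$, $(n,j)\in\mathcal I$; and your concluding step (dividing by the sine-type function $\tilde h$ and using ($A_3$)) is an acceptable variant of the paper's use of the completeness of $\bigl\{(\sqrt{\nu_n})^{-1}\sin\sqrt{\nu_n}\,t\bigr\}$. The problem is the middle step, which you yourself call the main obstacle: concluding $W\equiv 0$. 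Here you offer only a plan --- ``periodize $\omega$ with period $2$ using the zeros $\{\pi n\}$ and with period $1$ using the zeros $\{2\pi n\pm\al_k\}$, the $l_2$-remainders and finitely many exceptional eigenvalues being absorbed by the stability theory of exponential Riesz bases.'' This is not an argument, and as stated it cannot be one: since, as you correctly observe, the zero set has exactly the critical density for $B_{2,2}$, completeness of the associated exponential system is not stable under $l_2$-perturbations of the frequencies per se; the standard stability results (Bari-type theorems on quadratically close systems, which is what the cited papers use) require completeness or $\omega$-linear independence of the perturbed system as a hypothesis --- which is precisely the statement being proved. So the appeal to ``stability theory'' at this point is circular, and no mechanism is actually given by which the folded pieces of $\omega$ are forced to vanish once the zeros are only asymptotically, not exactly, equal to $\{\pm|2\pi n+\al_k|\}\cup\{\pi n\}$.

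The paper closes this gap by a concrete complex-analytic comparison: it forms the canonical product $D(\la)=\prod_{(n,j)\in\mathcal I}(1-\la/\la_{nj})$ over the actual (perturbed) zeros, derives from the asymptotics \eqref{asymptla} the representation \eqref{relD}, $D(\la)=C(\cos\rho-\cos\al_k)\frac{\sin\rho}{\rho}+\frac{\varkappa_{2,odd}(\rho)}{\rho}$, together with the lower bound $|D(\rho^2)|\ge C|\rho|^{-1}e^{2|\mathrm{Im}\,\rho|}$ in a sector; then $W/D$ is entire, bounded in the sector by \eqref{asymptW}, hence constant by Phragm\'en--Lindel\"of and Liouville, and the constant must vanish because $\rho W(\rho^2)\in B_{2,2}$ while the main term of $\rho D(\rho^2)$ is bounded but not in $L_2(\mathbb R)$. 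Some argument of this kind --- using the explicit generating function of the perturbed zero sequence, not just its density --- is indispensable here, and your proposal is missing it.
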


\begin{proof}
Suppose $w_1, w_2 \in L_2(0, 1)$ are such functions, that
\begin{equation} \label{sm1}
    \int_0^1 \left( w_1(t) a_{nj} \sin \rho_{nj} t  + w_2(t) b_{nj} \cos \rho_{nj} t \right) \, dt = 0, \quad (n, j) \in \mathcal I.
\end{equation}
Let $S_1(m, \la_{nj}) \ne 0$ for some $(n, j) \in \mathcal I$. By assumption ($A_3$), we have 
$S_2(1, \la_{nj}) \ne 0$. Therefore, using \eqref{Delta}, \eqref{defabf} and taking assumption ($A_2$) into account, we get
$$
    a_{nj} = -\dfrac{b_{nj} d(\la_{nj})}{2 \rho_{nj} S_2(1, \la_{nj})}.
$$
Substituting this relation into \eqref{sm1}, we obtain
\begin{equation} \label{sm2}
\int_0^1 \left( w_1(t) d(\la_{nj}) \dfrac{\sin \rho_{nj} t}{\rho_{nj}} - 2 w_2(t) S_2(1, \la_{nj}) \cos \rho_{nj} t \right) \, dt = 0. 
\end{equation} 
In the case $S_1(m, \la_{nj}) = 0$ we have $S_1^{[1]}(m, \la_{nj}) \ne 0$. In view of \eqref{Delta}, $S_2(1, \la_{nj}) = 0$.
Assumption ($A_3$) yields $d(\la_{nj}) \ne 0$. Consequently, the relation \eqref{sm1} implies \eqref{sm2}. Thus, \eqref{sm2} holds 
for all $(n, j) \in \mathcal I$. Hence the entire function
\begin{equation} \label{defW}
   W(\la) := \int_0^1 \left( w_1(t) d(\la) \frac{\sin \rho t}{\rho} - 2 w_2(t) S_2(1, \la) \cos \rho t \right) \, dt
\end{equation}
has zeros $\Lambda$. Clearly, together with \eqref{intKN}, we get
\begin{equation} \label{asymptW}
W(\la) = O\left( |\rho|^{-1} \exp(2 |\mbox{Im}\, \rho|) \right), \quad |\rho| \to \iy.
\end{equation}
Taking assumption ($A_2$) into account, construct the infinite product
$$
D(\la) := \prod_{(n, j) \in \mathcal I} \left( 1 - \frac{\la}{\la_{nj}} \right).
$$
In view of assumption ($A_1$), the function $\dfrac{W(\la)}{D(\la)}$ is entire.
According to the asymptotic formulas \eqref{asymptla}, the function $D(\la)$ can be represented in the following form (see \cite[Appendix~B]{Bond17-preprint}):
\begin{equation} \label{relD}
    D(\la) = C(\cos \rho - \cos \al_k) \frac{\sin \rho}{\rho} + \frac{\varkappa_{2, odd}(\rho)}{\rho}, 
\end{equation}
where $C$ is a nonzero constant. Moreover, one has the following estimate 
$$
  	|D(\rho^2)| \ge C |\rho|^{-1} \exp(2 |\mbox{Im} \, \rho|) , \quad \eps < \arg \rho < \pi - \eps, \: |\rho| \ge \rho^*
$$
for some positive $\eps$ and $\rho^*$. Together with \eqref{asymptW} it yields
$$
   \frac{W(\la)}{D(\la)} = O(1), \quad \la = \rho^2, \: \eps < \arg \rho < \pi - \eps, \: |\rho| \ge \rho^*. 
$$
By Phragmen-Lindel\"of's and Liouville's theorems we get $W(\la) \equiv C D(\la)$. Using \eqref{defW}, one can show that
$\rho W(\rho^2) \in B_{2,2}$ (as a function of the variable $\rho$). However, relation \eqref{relD} implies
$\rho D(\rho^2) \not \in B_{2, 2}$. Hence $C \equiv 0$ and $W(\la) \equiv 0$.

Recall that $\{ \nu_n \}_{n \in \mathbb N}$ are the zeros of $S_2(1, \la)$. Assumption ($A_3$) requires $d(\nu_n) \ne 0$, $n \in \mathbb N$.
Consequently, it follows from \eqref{defW}, that
$$
   	\int_0^1 w_1(t) \frac{\sin \sqrt{\nu_n} t}{\sqrt{\nu_n}} \, dt = 0, \quad n \in \mathbb N.
$$
Note that $\{ \nu_n \}_{n \in \mathbb N}$ are the eigenvalues of the boundary value problem 
$$
    \ell_2 y_2 = \la y_2, \quad y_2(0) = y_2(1) = 0,
$$
therefore $\sqrt{\nu_n} = \pi n + \varkappa_n$, $n \in \mathbb N$ (see \cite{Sav01}).
This asymptotic relation implies that the system $\left\{ (\sqrt{\nu_n})^{-1} \sin \sqrt{\nu_n} t \right\}_{n \in \mathbb N}$
is complete in $L_2(0, 1)$ (see, for example, \cite{HV01}). Hence $w_1 = 0$. Then we conclude from \eqref{asymptW} and $W(\la) \equiv 0$, that $w_2 = 0$.
Thus, the system $\mathcal V$ is complete in $\mathcal H$.
\end{proof}

Relying on Lemma~\ref{lem:complete}, we shall prove the uniqueness theorem for the solution of Inverse Problem~\ref{ip:part}.
Along with the boundary value problem $L$, consider the problem $\tilde L$ of the same form, but with different potentials
$\tilde \sigma_j \in L_2(0, l_j)$, $j = 1, 2$. We agree that if a certain symbol $\ga$ denotes an object related to $L$, the corresponding symbol
$\tilde \ga$ denotes an analogous object related to $\tilde L$.

\begin{thm} \label{thm:uniq}
Suppose that the boundary value problems $L$ and $\tilde L$ together with their subspectra $\Lambda$ and $\tilde \Lambda$
of the form described above satisfy assumptions ($A_1$)--($A_3$), and $\sigma_1(x) = \tilde \sigma_1(x)$ a.e. on $(0, m)$,
$\Lambda = \tilde \Lambda$, $\Omega = \tilde \Omega$. Then $\sigma_2(x) = \tilde \sigma_2(x)$ a.e. on $(0, 1)$. Thus, Inverse Problem~\ref{ip:part}
has a unique solution.
\end{thm}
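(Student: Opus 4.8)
The plan is to reduce Inverse Problem~\ref{ip:part} to the periodic Inverse Problem~\ref{ip:periodic} on the loop, which is uniquely solvable by Algorithm~\ref{alg:periodic}. The link between the two problems is the completeness of the system $\mathcal V$ established in Lemma~\ref{lem:complete}, exploited in the Hilbert space $\mathcal H$.

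First I would note that $\sigma_1 = \tilde\sigma_1$ a.e. on $(0, m)$ implies $S_1(m, \la) = \tilde S_1(m, \la)$ and $S_1^{[1]}(m, \la) = \tilde S_1^{[1]}(m, \la)$ for all $\la$. Since also $\Lambda = \tilde\Lambda$, and the eigenvalues are numbered consistently so that $\rho_{nj} = \tilde\rho_{nj}$, the formulas \eqref{defabf} give $a_{nj} = \tilde a_{nj}$, $b_{nj} = \tilde b_{nj}$, $f_{nj} = \tilde f_{nj}$ for every $(n, j) \in \mathcal I$, whence $v_{nj} = \tilde v_{nj}$ by \eqref{defv}. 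The vector function $f$ from \eqref{defv} associated with $\sigma_2$ and the corresponding vector function $\tilde f$ associated with $\tilde\sigma_2$ both belong to $\mathcal H$, and by \eqref{scal} they satisfy $(f, v_{nj})_{\mathcal H} = f_{nj} = (\tilde f, v_{nj})_{\mathcal H}$ for all $(n, j) \in \mathcal I$. Thus $f - \tilde f$ is orthogonal to every element of $\mathcal V$, and Lemma~\ref{lem:complete} forces $f = \tilde f$ in $\mathcal H$; that is, $K = \tilde K$ and $N = \tilde N$ a.e. on $(0, 1)$.

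It remains to transfer this to the loop data. By \eqref{intKN}, $K = \tilde K$ yields $h(\la) = \tilde h(\la)$ and $N = \tilde N$ yields $d(\la) = \tilde d(\la)$, as entire functions of $\la$. Together with the hypothesis $\Omega = \tilde\Omega$, the data $(h, d, \Omega)$ of Inverse Problem~\ref{ip:periodic} coincide for $L$ and $\tilde L$. Since that inverse problem has a unique solution, provided by Algorithm~\ref{alg:periodic} and ultimately resting on the reconstruction theorem of \cite{HM03}, we conclude $\sigma_2 = \tilde\sigma_2$ a.e. on $(0, 1)$, which is the assertion of the theorem.

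I expect the main difficulty of the whole argument to lie not in this final deduction but in Lemma~\ref{lem:complete}, whose proof uses the Phragmen--Lindel\"of and Liouville theorems, the Paley--Wiener membership $\rho W(\rho^2) \in B_{2,2}$ contrasted with $\rho D(\rho^2) \notin B_{2,2}$, and the completeness of the system $\{(\sqrt{\nu_n})^{-1}\sin\sqrt{\nu_n}\,t\}_{n\in\mathbb N}$ in $L_2(0, 1)$. Granting Lemma~\ref{lem:complete} and the solvability of Inverse Problem~\ref{ip:periodic}, the only point requiring attention is the bookkeeping that $\sigma_1 = \tilde\sigma_1$ and $\Lambda = \tilde\Lambda$ propagate to equality of all the quantities $a_{nj}$, $b_{nj}$, $f_{nj}$ and of the vectors $v_{nj}$, after which the completeness argument closes the proof.
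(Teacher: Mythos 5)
Your proposal is correct and follows essentially the same route as the paper's own proof: equality of $S_1(m,\la)$, $S_1^{[1]}(m,\la)$ and of the subspectra gives $v_{nj}=\tilde v_{nj}$, $f_{nj}=\tilde f_{nj}$, completeness of $\mathcal V$ (Lemma~\ref{lem:complete}) forces $K=\tilde K$, $N=\tilde N$, hence $h\equiv\tilde h$, $d\equiv\tilde d$, and the uniqueness for the periodic Inverse Problem~\ref{ip:periodic} together with $\Omega=\tilde\Omega$ yields $\sigma_2=\tilde\sigma_2$. No discrepancies with the paper's argument.
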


\begin{proof}
The relation $\sigma_1(x) = \tilde \sigma_1(x)$ a.e. on $(0, m)$ implies $S_1(m, \la) \equiv \tilde S_1(m, \la)$,
$S_1^{[1]}(m, \la) \equiv \tilde S_1^{[1]}(m, \la)$. In view of \eqref{defabf}, \eqref{defv} and $\Lambda = \tilde \Lambda$,
we have $v_{nj} = \tilde v_{nj}$ in $\mathcal H$ and $f_{nj} = \tilde f_{nj}$ for $(n, j) \in \mathcal I$. Since by Lemma~\ref{lem:complete}
the system $\mathcal V$ is complete in $\mathcal H$, we conclude from \eqref{scal}, that $K(t) = \tilde K(t)$ and $N(t) = \tilde N(t)$
a.e. on $(0, 1)$. Then relation \eqref{intKN} yields $S_2(1, \la) \equiv \tilde S_2(1, \la)$, $d(\la) \equiv \tilde d(\la)$.
In addition, we have $\Omega = \tilde \Omega$, so $\sigma_2(x) = \tilde \sigma_2(x)$ follows from the uniqueness of the solution
of periodic Inverse Problem~\ref{ip:periodic}.
\end{proof}

\begin{thm} \label{thm:Riesz}
The system of vector functions $\mathcal V$ is a Riesz basis in $\mathcal H$.
\end{thm}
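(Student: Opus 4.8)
The strategy is to compare $\mathcal V$ with an explicit model system that is manifestly a Riesz basis and agrees with $\mathcal V$ up to a quadratically small error; since $\mathcal V$ has already been shown to be complete in Lemma~\ref{lem:complete}, the theorem will then follow from the classical stability principle that a complete system which is quadratically close to a Riesz basis is itself a Riesz basis.

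Concretely, I would introduce the unperturbed quantities $\rho^0_{nk} := |2\pi n + \al_k|$ and $\rho^0_{n0} := \pi n$ (the zeros appearing in the proof of Lemma~\ref{lem:asympt}), $a^0_{nj} := \cos(\rho^0_{nj} m)$, $b^0_{nj} := 2 \sin(\rho^0_{nj} m)$, and
\[
   v^0_{nj}(t) := \begin{bmatrix} a^0_{nj} \sin \rho^0_{nj} t \\ b^0_{nj} \cos \rho^0_{nj} t \end{bmatrix}, \qquad (n,j) \in \mathcal I .
\]
Because $m \in \mathbb N$ one has $a^0_{n0} = (-1)^{nm}$, $b^0_{n0} = 0$, while $a^0_{nk} = \cos(\al_k m)$ and $b^0_{nk} = \pm 2 \sin(\al_k m)$, which is nonzero by \eqref{alk}. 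Using \eqref{CS}, the asymptotics \eqref{asymptla} (so that $\rho_{nj} - \rho^0_{nj} = \varkappa_n$), and the fact that $\{ \rho_{nj} \}_{(n,j) \in \mathcal I}$ is a uniformly separated sequence — here one uses that $\al_k$ stays away from $\pi \mathbb Z$ together with assumption ($A_1$), whence the values of the Paley--Wiener functions $\varkappa_{m, even}$, $\varkappa_{m, odd}$ along $\{ \rho_{nj} \}$ are square summable — I would verify the quadratic closeness estimate $\sum_{(n,j) \in \mathcal I} \| v_{nj} - v^0_{nj} \|^2_{\mathcal H} < \iy$.

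The heart of the proof is that $\mathcal V^0 := \{ v^0_{nj} \}$ is a Riesz basis of $\mathcal H$. The subfamily $\{ v^0_{n0} \}_{n \in \mathbb N}$ is, up to normalization, an orthonormal basis of $H_1 := L_2(0,1) \oplus \{ 0 \}$, since $\{ \sin \pi n t \}_{n \ge 1}$ is an orthogonal basis of $L_2(0,1)$. A bounded, boundedly invertible, block-triangular change of coordinates in $\ell_2(\mathcal I)$ — legitimate because $\{ [ a^0_{nk} \sin \rho^0_{nk} t, \, 0 ]^{\top} \}_{n \in \mathbb Z}$ is a Bessel system (the sequence $\{ \rho^0_{nk} \}_{n \in \mathbb Z}$ being uniformly separated) — then reduces the claim to showing that $\{ [ 0, \, b^0_{nk} \cos \rho^0_{nk} t ]^{\top} \}_{n \in \mathbb Z}$ is a Riesz basis of $H_2 := \{ 0 \} \oplus L_2(0,1)$; as $|b^0_{nk}| \equiv 2 |\sin \al_k m| > 0$, this amounts to proving that the shifted cosine system $\{ \cos((2\pi n + \al_k) t) \}_{n \in \mathbb Z}$ is a Riesz basis of $L_2(0,1)$. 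This is the hard part: the frequency set $\{ 2\pi n \pm \al_k : n \in \mathbb Z \}$ is not a small perturbation of a standard lattice, so Kadec-type theorems do not apply directly. I would establish it in the spirit of the proof of Lemma~\ref{lem:complete}. Completeness: an even entire function of exponential type $\le 1$ belonging to $L_2(\mathbb R)$ and vanishing on $\{ 2\pi n \pm \al_k \}$ must vanish identically — dividing by the generating function $\cos \mu - \cos \al_k$, which has exactly these zeros but is of exponential type $1$ and not square integrable on $\mathbb R$, one is left with an entire function bounded on $\mathbb R$ and, by inspection of indicators, bounded in the imaginary directions, hence constant by Phragmen--Lindel\"of and Liouville, and necessarily zero (compare \eqref{relD}). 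Minimality: the biorthogonal functions exist, their cosine transforms being proportional to $(\cos \mu - \cos \al_k)/(\mu^2 - (2\pi n + \al_k)^2)$. Finally both the system and its biorthogonal sequence are Bessel with uniformly bounded norms, by the uniform separation and the Plancherel--P\'olya inequality; an exact system with a Bessel biorthogonal sequence is a Riesz basis.

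Once $\mathcal V^0$ is known to be a Riesz basis, the operator $T$ on $\mathcal H$ determined by $T v^0_{nj} = v_{nj}$ is bounded and, by the quadratic closeness, differs from the identity by a Hilbert--Schmidt (in particular compact) operator; since $\mathcal V$ is complete by Lemma~\ref{lem:complete}, $T$ has dense range, hence — being a compact perturbation of the identity — is surjective and therefore boundedly invertible, so $\mathcal V = T \mathcal V^0$ is a Riesz basis of $\mathcal H$. The step demanding the most care is the Riesz basis property of the shifted cosine system $\{ \cos((2\pi n + \al_k) t) \}_{n \in \mathbb Z}$ (equivalently, of the model $\mathcal V^0$); the remaining steps are routine bookkeeping with \eqref{CS} and \eqref{asymptla}, or standard perturbation theory for Riesz bases.
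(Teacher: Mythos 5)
Your argument has the same skeleton as the paper's proof: the same model system $\mathcal V^0$ built from the limiting frequencies $|2\pi n+\al_k|$ and $\pi n$ with weights $\cos\al_k m$, $\pm 2\sin\al_k m$ (nonzero by \eqref{alk}), the quadratic closeness $\sum\|v_{nj}-v_{nj}^0\|_{\mathcal H}^2<\infty$ obtained from \eqref{CS} and \eqref{asymptla}, and the conclusion via ``complete $+$ $\ell_2$-close to a Riesz basis $\Rightarrow$ Riesz basis'' (which you, unlike the paper, justify explicitly through the Fredholm perturbation $T=I+$ Hilbert--Schmidt; that is a correct and welcome elaboration). The genuine divergence is in how the Riesz basis property of $\mathcal V^0$ is settled. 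The paper decouples the two components by an explicit bounded operator $A$ on $\mathcal H$ (subtracting $\tfrac12\cot(\al_k m)\,g(v_2)$), which uses the Riesz basis property of \emph{both} shifted systems $\{\sin(2\pi n+\al_k)t\}$ and $\{\cos(2\pi n+\al_k)t\}$, imported wholesale from \cite[Appendix~A]{Bond17-preprint}. You instead decouple by a block-triangular change of coordinates that needs only the Bessel property of the sine family (a slightly more economical reduction), and then you prove the Riesz basis property of $\{\cos((2\pi n+\al_k)t)\}_{n\in\mathbb Z}$ from scratch via exactness plus Bessel bounds for the system and its biorthogonal family, using the generating function $\cos\mu-\cos\al_k$. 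This makes the proof self-contained where the paper relies on a citation, and your observation that Kadec-type perturbation theorems do not apply is correct.

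One caution about the self-contained part: the Plancherel--P\'olya inequality, which you invoke for both Bessel bounds, only gives the Bessel property of the cosine system itself (it controls $\sum_n|F(\mu_n)|^2$ by $\|F\|_{L_2(\mathbb R)}^2$ for separated real nodes). The Bessel bound for the \emph{biorthogonal} family, whose transforms are $(\cos\mu-\cos\al_k)/(\mu^2-(2\pi n+\al_k)^2)$ up to normalization, is a synthesis-side estimate and does not follow from point evaluations; it requires, e.g., the Levin--Golovin theory for the sine-type function $\cos\mu-\cos\al_k$ (whose zeros $2\pi n\pm\al_k$ are separated because $\al_k\in(0,\pi)$), or an explicit discrete-Hilbert-transform/Schur estimate exploiting that the generating function vanishes at each node. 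This is a standard but nontrivial step --- it is essentially what \cite[Appendix~A]{Bond17-preprint} supplies --- so either carry it out or cite it; as written, that sentence is the only under-justified link in an otherwise sound proof.
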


\begin{proof}
Using \eqref{asymptla} and \eqref{defabf}, we get
$$
   a_{nj} = \cos \rho_{nj} m + \varkappa_n, \quad b_{nj} = 2 \sin \rho_{nj} m + \varkappa_n, \quad (n, j) \in \mathcal I.
$$
Consequently, we have $\{ \| v_{nj} - v_{nj}^0 \|_{\mathcal H} \}_{(n, j) \in \mathcal I} \in l_2$, where
$$
    v_{nk}^0(t) = \begin{bmatrix} \cos \al_k m \sin |2 \pi n + \al_k| t \\ 2 \sin \al_k m \cos |2 \pi n + \al_k| t \end{bmatrix}, \: n \in \mathbb Z,
    \quad
    v_{n0}^0(t) = \begin{bmatrix} \sin \pi n t \\ 0 \end{bmatrix}, \: n \in \mathbb N.
$$
Note that \eqref{alk} implies $\cos \al_k m \ne 0$, $\sin \al_k m \ne 0$.

Let us show that the system $\mathcal V^0 := \{ v_{nj}^0 \}_{(n, j) \in \mathcal I}$ is a Riesz basis in $\mathcal H$.
It follows from the results of \cite[Appendix~A]{Bond17-preprint}, that the systems $\{ \sin (2 \pi n + \al_k) t \}_{n \in \mathbb Z}$
and $\{ \cos (2 \pi n + \al_k) t \}_{n \in \mathbb Z}$ are Riesz bases in $L_2(0, 1)$.
Consider the linear operator $A \colon \mathcal H \to \mathcal H$, defined as follows.
$$
  	A v = A \begin{bmatrix} v_1 \\ v_2 \end{bmatrix} = \begin{bmatrix} v_1 - \frac{1}{2} \cot(\al_k m) g(v_2) \\ v_2 \end{bmatrix}, 
  	\quad v \in \mathcal H,
$$
where
$$
  	g(u)(t) = \sum_{n \in \mathbb Z} c_n(u) \sin |2 \pi n + \al_k| t, \quad u(t) = \sum_{n \in \mathbb Z} c_n(u) \cos |2 \pi n + \al_k| t,  
$$
i.e. $c_n(u)$ are the coordinates of the function $u \in L_2(0, 1)$ with respect to the Riesz basis $\{ \cos |2 \pi n + \al_k| t \}_{n \in \mathbb Z}$.
It follows from the Riesz-basis property, that there exist positive constants $C_1$ and $C_2$ such that
$$
    C_1 \| u \|_{L_2} \le \| g(u) \|_{L_2} \le C_2 \| u \|_{L_2}.
$$
Consequently, the operator $A$ and its inverse:
$$
    A^{-1} v = A \begin{bmatrix} v_1 \\ v_2 \end{bmatrix} = \begin{bmatrix} v_1 + \frac{1}{2} \cot(\al_k m) g(v_2) \\ v_2 \end{bmatrix}, 
  	\quad v \in \mathcal H, 
$$ 
are bounded in $\mathcal H$. Note that the operator $A$ transforms the sequence $\mathcal V^0$ into a Riesz basis
in $\mathcal H$:
$$
   (A v_{nk}^0)(t) = 2 \sin \al_k m \begin{bmatrix} 0 \\ \cos (2 \pi n + \al_k) t \end{bmatrix}, \: n \in \mathbb Z, \quad
   (A v_{n0}^0)(t) = \begin{bmatrix} \sin \pi n t \\ 0 \end{bmatrix}, \: n \in \mathbb N.  
$$
Hence the system $\mathcal V^0$ is also a Riesz basis. 

Since the system $\mathcal V$ is complete by Lemma~\ref{lem:complete} and
$l_2$-close to the Riesz basis $\mathcal V^0$, we conclude that $\mathcal V$ is a Riesz basis in $\mathcal H$. 
\end{proof}

Recovering the vector function $f$ from its coordinates with respect to the Riesz basis, one can solve Inverse Problem~\ref{ip:part}
by the following algorithm.

\begin{alg} \label{alg:part}
Let the potential $\sigma_1$, the eigenvalues $\Lambda$ and the signs $\Omega$ be given.

\begin{enumerate}
\item Construct the functions $S_1(m, \la)$ and $S_1^{[1]}(m, \la)$.
\item Find the vector functions $v_{nj}$ and the numbers $f_{nj}$, using \eqref{defabf} and \eqref{defv}.
\item Construct the vector function $f$ by its coordinates with respect to the Riesz basis (see \eqref{scal}), i.e. find the functions 
$K(t)$ and $N(t)$.
\item Construct the functions $h(\la) := S_2(1, \la)$ and $d(\la)$ by \eqref{intKN}.
\item Recover the potential $\sigma_1$ from $h(\la)$, $d(\la)$ and $\Omega$, using Algorithm~\ref{alg:periodic}. 
\end{enumerate}

\end{alg}

{\bf Acknowledgment.} 
The author C.-F.~Yang was supported in part by the National Natural Science Foundation of
China (11171152, 11611530682 and 91538108) and by the Natural Science Foundation of
the Jiangsu Province of China (BK 20141392).
The author N.~P.~Bondarenko was supported by the Russian Federation
President Grant MK-686.2017.1, by Grant 1.1660.2017/4.6 of the Russian
Ministry of Education and Science, and by Grants
16-01-00015, 17-51-53180 of the Russian Foundation for Basic Research.

\medskip

\noindent Chuan-Fu Yang \\
Department of Applied Mathematics, Nanjing University of Sciences and Technology, \\
Nanjing, 210094, Jiangsu, China, \\
email: {\it chuanfuyang@njust.edu.cn}

\medskip

\noindent Natalia Pavlovna Bondarenko \\
1. Department of Applied Mathematics, Samara National Research University, \\
Moskovskoye Shosse 34, Samara 443086, Russia, \\
2. Department of Mechanics and Mathematics, Saratov State University, \\
Astrakhanskaya 83, Saratov 410012, Russia, \\
e-mail: {\it BondarenkoNP@info.sgu.ru}


\begin{thebibliography}{99}

\bibitem{Kuch02}
Kuchment, P. Graph models for waves in thin structures, Waves in Random Media 12:4 (2002), R1--R24.

\bibitem{Exner08}
Analysis on Graphs and Its Applications, edited by P. Exner, J.P. Keating, P. Kuchment,
T. Sunada and Teplyaev, A. Proceedings of Symposia in Pure Mathematics, AMS, 77.
(2008).

\bibitem{PPP04}
Pokorny, Yu. V.; Penkin, O. M.; Pryadiev, V. L. et al. Differential Equations on Geometrical Graphs,
Fizmatlit, Moscow (2004) (Russian).

\bibitem{Kuch04}
Kuchment, P. Quantum graphs. Some basic structures. Waves Random Media 14 (2004),
S107--S128.

\bibitem{Yur16}
Yurko, V. A. Inverse spectral problems for differential operators on spatial networks, Russian Mathematical Surveys 71:3 (2016), 539--584.

\bibitem{Piv00}
Pivovarchik, V. N. Inverse problem for the Sturm-Liouville equation on a simple graph,
SIAM J. Math. Anal. 32:4 (2000), 801--819.

\bibitem{Yang10}
Yang, C.-F. Inverse spectral problems for the Sturm-Liouville operator on a $d$-star graph, 
J. Math. Anal. Appl. 365 (2010), 742--749.

\bibitem{Yang11}
Yang, C.-F.; Yang X.-P. Uniqueness theorems from partial information of the potential on a
graph, J. Inverse Ill-Posed Prob. 19 (2011), 631-–639.

\bibitem{Yur09}
Yurko, V. A. Inverse nodal problems for the Sturm-Liouville differential operators on a star-type graph,
Siberian Math. J. 50:2 (2009), 373-–378.

\bibitem{Bond17-1}
Bondarenko, N. P. A partial inverse problem for the Sturm–Liouville operator on a star-shaped graph, 
Anal. Math. Phys. (2017), DOI: 10.1007/s13324-017-0172-x

\bibitem{Bond17-2}
Bondarenko, N. P. Partial inverse problems for the Sturm-Liouville operator on a star-shaped graph
with mixed boundary conditions, J. Inverse Ill-Posed Probl. (2017), 
published online 2017-03-16, DOI: 10.1515/jiip-2017-0001.

\bibitem{Bond17-preprint}
Bondarenko, N. P. A 2-edge partial inverse problem for the Sturm-Liouville operators with singular potentials on a star-shaped graph,
Tamkang J. Math. (2017, accepted for publication), preprint: arXiv:1702.08293 [math.SP].

\bibitem{MMT08}
Marchenko, V.; Mochizuki, K.; Trooshin, I. Inverse scattering on a graph containing
circle. Analytic Methods of Analysis and Differ. Equations: AMADE 2006,
237--243. Cambridge Sci. Publ., Cambridge, 2008. 

\bibitem{MT12}
Mochizuki, K.; Trooshin, I. On the scattering on a loop shaped graph. Evolution
Equations of hyperbolic and Schroedinger type, 227--245, Progr. Math., 301,
Birkhauser/Springer. Basel A6, Basel, 2012. 

\bibitem{Kur13}
Kurasov P. Inverse scattering for lasso graph. J. Math. Phys. 54 (2013), No. 4,
04210314 

\bibitem{HL78}
Hochstadt, H.; Lieberman, B. An inverse Sturm-Liouville problem with mixed given data, SIAM J. Appl. Math. 34 (1978), 676--680.

\bibitem{HM03}
Hryniv, R. O.; Mykytyuk, Ya. V. Inverse spectral problems for Sturm-Liouville operators
with singular potentials, Inverse Problems 19 (2003), 665-684.

\bibitem{HM04-2spectra}
Hryniv, R.O.; Mykytyuk, Ya. V. Inverse spectral problems for Sturm-Liouville operators with singular potentials, II. Reconstruction by two spectra, in: V. Kadets, W. Zelazko (Eds.), Functional Analysis and Its Applications, in: North-Holland Math. Stud., vol. 197, North-Holland Publishing, Amsterdam (2004), 97–114.

\bibitem{HM04-half}
Hryniv, R.O.; Mykytyuk, Ya. V.  Half-inverse spectral problems for Sturm-Liouville
operators with singular potentials, Inverse Problems 20 (2004), 1423--1444.

\bibitem{HM04-transform}
Hryniv, R. O.; Mykytyuk, Ya. V. Transformation operators for Sturm-Liouville operators with singular
potentials, Math. Phys. Anal. Geom. 7 (2004), 119--149.

\bibitem{FIY08}
Freiling, G.; Ignatiev, M.; Yurko, V. An inverse spectral problem for Sturm-Liouville operators with singular potentials on star-type graph, Proc. Symp. Pure Math. 77 (2008), 397--408.

\bibitem{Sav01}
Savchuk, A. M. On the eigenvalues and eigenfunctions of the Sturm-Liouville operator with a singular potential,
Mathematical Notes 69:2 (2001), 245--252.

\bibitem{HV01}
He, X.; Volkmer, H. Riesz bases of solutions of Sturm-Liouville equations, J. Fourier Anal. Appl. 7:3 (2001), 297--307.

\bibitem{Stan70}
Stankevich, I. V. An inverse problem of spectral analysis for Hill's equations, Doklady Akad. Nauk
SSSR 192, no. 1 (1970), 34--37 (Russian).

\bibitem{MO75}
Marchenko, V. A.; Ostrovskii, I. V. A characterization of the spectrum of the Hill operator, Mat.
Sbornik 97 (1975), 540–-606 (Russian); English transl. in Math. USSR Sbornik 26 (1975), 4, 493-–554.

\bibitem{Yur08}
Yurko, V. A. Inverse problems for Sturm-Liouville operators on graphs with a cycle, Operators and Matrices 2:4 (2008), 543--553.

\bibitem{FY01}
Freiling, G.; Yurko, V. Inverse Sturm-Liouville Problems and Their Applications. Huntington,
NY: Nova Science Publishers (2001).

\end{thebibliography}
\end{document}